\date{}
\newtheorem{observation}{Observation}
\newtheorem{claimlabelled}{Claim}
\title{
Unavoidable Patterns in Complete Simple Topological Graphs\thanks{Supported by NSF CAREER award DMS-1800746 and NSF award DMS-1952786.}
}
\author{
Andrew Suk\and Ji Zeng
}
\institute{
Department of Mathematics, University of California at San Diego\\ La Jolla, CA, 92093 USA\\ \email{\{asuk,jzeng\}@ucsd.edu}
}
\begin{document}

\maketitle

\begin{abstract}
We show that every complete $n$-vertex simple topological graph contains a topological subgraph on at least $(\log n)^{1/4 - o(1)}$ vertices that is weakly isomorphic to the complete convex geometric graph or the complete twisted graph. This is the first improvement on the bound $\Omega(\log^{1/8}n)$ obtained in 2003 by  Pach, Solymosi, and T\'oth.  We also show that every complete $n$-vertex simple topological graph contains a plane path of length at least $(\log n)^{1 -o(1)}$.

\keywords{Topological graph \and Unavoidable Patterns \and Plane path.}
\end{abstract}

\section{Introduction}

A \emph{topological graph} is a graph drawn in the plane or, equivalently, on the sphere, such that its vertices are represented by points and its edges are represented by non-self-intersecting arcs connecting the corresponding points. The arcs are not allowed to pass through vertices different from their endpoints, and if two edges share an interior point, then they must properly cross at that point in common. A topological graph is \emph{simple} if every pair of its edges intersect at most once, either at a common endpoint or at a proper crossing point. If the edges are drawn as straight-line segments, then the graph is said to be \emph{geometric}. If the vertices of a geometric graph are in convex position, then it is called \emph{convex}.  

Simple topological graphs have been extensively studied \cite{jan,handbook,pt05,rst,sw}, and are sometimes referred to as \emph{good drawings} \cite{oswin,oswin2022}, or simply as \emph{topological graphs} \cite{pst}. In this paper, we are interested in finding large unavoidable patterns in complete simple topological graphs. Two simple topological graphs $G$ and $H$ are \emph{isomorphic} if there is a homeomorphism of the sphere that transforms $G$ to $H$.  We say that $G$ and $H$ are \emph{weakly isomorphic} if there is an incidence preserving bijection between $G$ and $H$ such that two edges of $G$ cross if and only if the corresponding edges in $H$ cross as well.  Clearly, any two complete convex geometric graphs on $m$ vertices are weakly isomorphic.  Hence, let $C_m$ denote any complete convex geometric graph with $m$ vertices.

By the famous Erd\H os-Szekeres convex polygon theorem \cite{es} (see also \cite{sukes}), every complete $n$-vertex geometric graph contains a geometric subgraph on $m = \Omega(\log n)$ vertices that is weakly isomorphic to $C_m$. (Note that no three vertices in a complete geometric graph are collinear.) Interestingly, the same is not true for simple topological graphs.  The \emph{complete twisted graph} $T_m$ is a complete simple topological graph on $m$ vertices with the property that there is an ordering on the vertex set $V(T_m) = \{v_1,v_2,\dots, v_m\}$ such that edges $v_iv_j$ and $v_kv_{\ell}$ cross if and only if $i < k < \ell < j$ or $k < i< j < \ell$.  See Figure \ref{fig:ct}. It was first observed by Harborth and Mengerson \cite{HM} that $T_m$ does not contain a topological subgraph that is weakly isomorphic to $C_5$.  However, in 2003, Pach, Solymosi, and T\'oth \cite{pst} showed that it is impossible to avoid both $C_m$ and $T_m$ in a sufficiently large complete simple topological graph.

\begin{figure}
    \centering
    \includegraphics[scale=.55]{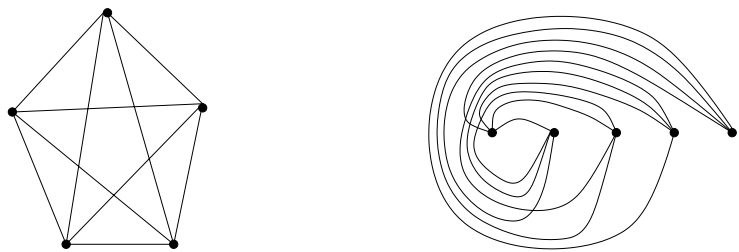}
    \caption{$C_5$ and $T_5$.}
    \label{fig:ct}
\end{figure}

\begin{theorem}[Pach-Solymosi-T\'oth]\label{old}
Every complete $n$-vertex simple topological graph contains a topological subgraph on  $m \geq \Omega(\log^{1/8}n)$ vertices that is weakly isomorphic to $C_m$ or $T_m$.  
\end{theorem}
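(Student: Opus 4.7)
The plan is to impose a canonical linear order on the vertex set using the rotation system at a single vertex, classify every 4-tuple of vertices by its crossing pattern under that order, and then extract a large homogeneous subset via a sequence of Ramsey-type reductions, finally identifying this subset as $C_m$ or $T_m$.

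First, I would pick an arbitrary vertex $v_0$ and use the cyclic rotation of edges around $v_0$ to induce a cyclic order on $V\setminus\{v_0\}$; cutting this cycle at one point yields a linear order $v_1\prec v_2\prec\cdots\prec v_N$ with $N\ge n-2$. For any indices $i<j<k<\ell$, the 4-tuple $\{v_i,v_j,v_k,v_\ell\}$ supports three pairings of endpoints: the \emph{separated} pair $\{v_iv_j,v_kv_\ell\}$, the \emph{interleaved} pair $\{v_iv_k,v_jv_\ell\}$, and the \emph{nested} pair $\{v_iv_\ell,v_jv_k\}$. A sub-pattern weakly isomorphic to $C_m$ is precisely one in which, for every 4-tuple, only the interleaved pair is a pair of crossing edges; $T_m$ arises similarly from the nested pair.

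Next, I would assign each 4-tuple a $3$-bit crossing label indicating which of these pairings consist of crossing edges, and seek a large subset on which the label is constant. A direct application of 4-uniform hypergraph Ramsey would only give $\Omega(\log\log n)$, which is much too weak, so I would replace that single step by reductions on \emph{pairwise} attributes. After an initial graph Ramsey reduction that homogenizes a basic binary property of edges (e.g. on which side of a fixed reference edge each endpoint lies) to obtain a clique of size $\Omega(\log n)$, I would perform three successive Erd\H os-Szekeres-style extractions driven by three natural total orders on the clique derived from the local topology; each extraction takes the size $N$ to $N^{1/2}$, yielding the final size $(\log n)^{1/2^{3}}=(\log n)^{1/8}$.

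The hard part will be verifying that once all reductions are performed, the $3$-bit label is forced to be the $C_m$ label or the $T_m$ label rather than one of the six other labels; this requires a case analysis using simplicity of the drawing, whose crucial geometric input is that if two distinct pairings in some 4-tuple were both pairs of crossing edges, then simplicity together with a planarity argument on the closed curves they form would force two specific edges to share two crossings, a contradiction. A secondary but subtle issue is designing the three total orders in the Erd\H os-Szekeres step so that monotonicity in each order encodes exactly one of the three bits of the crossing label, ensuring that a subset monotone in all three orders is automatically homogeneous; aligning these orders with the rotation-induced structure of the topological graph is the most delicate combinatorial ingredient of the proof.
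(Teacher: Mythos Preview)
Your plan diverges from the Pach--Solymosi--T\'oth argument at the very first combinatorial step, and the divergence opens a gap you have not closed. The actual approach (as recounted in this paper) does \emph{not} colour $4$-tuples by their internal crossing pattern. One fixes the apex $v_0$, orders the remaining vertices by the rotation at $v_0$, and colours each \emph{triple} $v_i\prec v_j\prec v_k$ by the $3$-bit string $\chi(v_i,v_j,v_k)=xyz$ recording which of the edges $v_0v_i,\,v_0v_j,\,v_0v_k$ is crossed by the opposite edge among $v_jv_k,\,v_iv_k,\,v_iv_j$. A short topological argument (Observation~\ref{observation_pst} here) shows only the four colours $000,001,010,100$ occur, and a set on which $\chi$ is constant induces $C_m$ (colours $000,010$) or $T_m$ (colours $001,100$). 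The whole point of threading everything through $v_0$ is to push the uniformity down from $4$ to $3$; it is the specific structure of \emph{this} $3$-uniform colouring that permits a polylogarithmic extraction instead of the iterated-logarithmic one that blind hypergraph Ramsey would give.

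Your scheme instead stays at uniformity $4$ and hopes to recover $(\log n)^{1/8}$ via an initial graph-Ramsey step followed by three monotone-subsequence extractions. The gap is that an Erd\H os--Szekeres extraction with respect to a total order on \emph{vertices} homogenises only a pairwise relation: monotonicity of a vertex sequence says nothing about whether two disjoint edges on four of its points cross. For your plan to work you would need, for each of the three bits of the $4$-tuple label, a vertex-level (or at worst pair-level) order whose monotone subsequences force that bit to be constant on all $4$-tuples; you do not exhibit such orders, and for the ``separated'' bit (does $v_iv_j$ cross $v_kv_\ell$ for $i<j<k<\ell$?) it is hard to see how any order on single vertices could encode it. The sentence conceding that ``aligning these orders \dots\ is the most delicate combinatorial ingredient'' is precisely where the proof is missing. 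Note also that in a simple drawing of $K_4$ at most one of the three pairings crosses, so only four labels occur, not eight; but you still owe an argument ruling out a large subset homogeneous in the label $100$ (``only separated pairs cross'') under your ordering, which is not addressed.
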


The main result of this paper is the following improvement.
\begin{theorem}\label{main}
Every complete $n$-vertex simple topological graph has a topological subgraph on $m\geq (\log n)^{1/4 - o(1)}$ vertices that is weakly isomorphic to $C_m$ or $T_m$.
\end{theorem}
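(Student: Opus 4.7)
The natural strategy, following the high-level approach of Pach--Solymosi--T\'oth, is to fix one vertex, order the remaining vertices around it, and then extract a homogeneous ordered substructure by iterated Erd\H{o}s--Szekeres-type arguments. Each such extraction introduces a square-root loss in the exponent, and the PST bound $\log^{1/8}n$ corresponds to three such losses; the target bound $(\log n)^{1/4-o(1)}$ should come from cutting the number of square-root losses from three to two.

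Concretely, I would first pick an arbitrary vertex $v$ in the complete simple topological graph $G$ and order the remaining vertices $v_1,\ldots,v_{n-1}$ by the cyclic rotation of edges at $v$, breaking the cycle arbitrarily. For every edge $v_iv_j$ with $i<j$, the closed curve formed by $vv_i$, $v_iv_j$, $vv_j$ bounds a disk on the sphere, and since the drawing is simple, every $v_k$ with $i<k<j$ lies on exactly one side; equivalently, $vv_k$ either crosses $v_iv_j$ or it does not. In $C_m$ (viewed from an extremal vertex) every such $vv_k$ crosses $v_iv_j$, whereas in $T_m$ (viewed from its first vertex) none does, so these two extreme patterns are precisely what one wants to extract.

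Second, I would classify edges by this crossing type and apply a two-stage extraction: first a Ramsey-type step to pass from $n$ to roughly $\log n$ vertices on which the edge-type colouring is controlled; then a single application of a two-dimensional Erd\H{o}s--Szekeres-type lemma to produce an index subset $S\subseteq\{1,\ldots,n-1\}$ of size $|S|\geq(\log n)^{1/4-o(1)}$ on which every edge has the same crossing type with respect to the star at $v$. The savings over PST come from replacing two separate iterated extractions---one to monochromatize the edge-type, a second to sort edges monotonically---by a single two-dimensional extraction, incurring only two square-root losses instead of three.

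The principal obstacle is the final step: an edge's crossing pattern with the star at $v$ does not a priori determine how two non-star edges cross each other. A topological lemma---using the Jordan curve theorem together with the simplicity of $G$---is needed to show that once the star-crossing type is uniform on $S$, the pairwise crossings among non-star edges are also forced to realize those of $C_{|S|}$ or $T_{|S|}$. Arranging matters so that this lemma applies after only two square-root losses, rather than requiring a further iterative extraction, is the technical heart of the improvement over PST and the step I expect to be the most delicate.
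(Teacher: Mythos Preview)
Your setup agrees with the paper's: fix $v_0$, order $v_1,\ldots,v_{n-1}$ by the rotation at $v_0$. But the one-bit classification you propose---whether $v_0v_k$ crosses $v_iv_j$ for $i<k<j$---is too coarse. In the paper's language this is the middle bit $y$ of the four-colour triple colouring $\chi(v_i,v_j,v_k)\in\{000,010,001,100\}$, and $y=0$ does \emph{not} characterise the twisted pattern: the monochromatic class $000$ yields a subgraph weakly isomorphic to $C_m$, while $001$ and $100$ each yield $T_m$. So ``every edge has the same crossing type with respect to the star'' is not the right homogeneity to aim for, and your claimed $C_m$/$T_m$ dichotomy on this single bit is false.

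The actual source of the improvement from $1/8$ to $1/4$ consists of two ingredients your plan does not mention. First, the colours $001$ and $100$ are shown to be \emph{transitive} (Lemma~\ref{100}): a monotone $3$-path of length $m_2$ in either colour forces, via Lemma~\ref{complete}, a complete monochromatic sub-$3$-graph and hence a copy of $T_{m_2}$. This lets one record, for each pair $(v_i,v_j)$, the two maximal path-lengths $(a,b)$ with $a,b<m_2$, and pigeonhole into $m_2^2$ classes---paying only a polynomial factor per stage, not a Ramsey extraction. Second, on each class the residual problem is a genuine two-colouring ($000$ versus $010$) of triples, and here the paper plays the \emph{vertex online Ramsey game}, using the bound $r(m)=(1+o(1))m^2\log m$ of Lemma~\ref{online} while adaptively refining the remaining vertex pool. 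Combining these yields the condition $9(m_1m_2)^2\log m_1\log m_2<\log n$ of Theorem~\ref{main2}, whence $m=(\log n)^{1/4-o(1)}$.

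Your ``two square-root losses'' accounting does not match this structure: the exponent $1/4$ is not produced by two nested Erd\H{o}s--Szekeres extractions $\log n\to(\log n)^{1/2}\to(\log n)^{1/4}$, but by a single exponential step whose exponent is $\Theta(m^4(\log m)^2)$. The ``two-dimensional Erd\H{o}s--Szekeres lemma'' you invoke is left unspecified, and nothing in the plan stands in for either the transitivity lemma or the online Ramsey bound. The obstacle you flag at the end---that star-crossing data need not determine non-star crossings---is real but is already handled by the Pach--Solymosi--T\'oth argument (Claim~\ref{PSTlemma}); it is not where the new work lies. As written, the plan does not reach $(\log n)^{1/4-o(1)}$.
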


\noindent In the other direction, let us consider the following construction. Let $V = \{1,2,\dots, n\}$ be $n$ vertices placed on the $x$-axis, and for each pair $\{i,j\} \in V$, draw a half-circle connecting $i$ and $j$, with this half-circle either in the upper or lower half of the plane uniformly randomly.  By applying the standard probabilistic method \cite{alon}, one can show that there is a complete $n$-vertex simple topological graph that does not contain a topological subgraph on $m = \lceil 8\log n\rceil$ vertices that is weakly isomorphic to $C_m$ or $T_m$.  Another construction, observed by Scheucher \cite{Sch}, is to take $n$ points in the plane with no $2\lceil \log n\rceil$ members in convex position, and then draw straight-line segments between all pairs of points.

It is not hard to see that both $C_m$ and $T_m$ contain a plane (i.e. crossing-free) subgraph isomorphic to any given tree $T$ with at most $m$ vertices (see, e.g., \cite{gmpp}).  Thus, as a corollary of Theorem \ref{main}, we obtain the following.

\begin{corollary}\label{cor}
Every complete $n$-vertex simple topological graph contains a plane subgraph isomorphic to any given tree $T$ with at most $(\log n)^{1/4 - o(1)}$ vertices.
\end{corollary}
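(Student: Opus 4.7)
The plan is to derive the corollary directly from Theorem \ref{main} together with the cited fact that both $C_m$ and $T_m$ admit a plane (crossing-free) subgraph isomorphic to any prescribed tree on at most $m$ vertices. First I would apply Theorem \ref{main} to the given complete $n$-vertex simple topological graph $G$ to extract a topological subgraph $H \subseteq G$ on $m \geq (\log n)^{1/4 - o(1)}$ vertices that is weakly isomorphic either to $C_m$ or to $T_m$. Next, given the target tree $T$ with $|V(T)| \leq m$, I would invoke the stated property of $C_m$ and $T_m$ to obtain, inside whichever of these two model graphs $H$ matches, a plane subgraph $T'$ isomorphic to $T$.

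The remaining point is to transfer $T'$ back to $G$. Here I would use the definition of weak isomorphism: the incidence-preserving bijection between $H$ and $C_m$ (or $T_m$) preserves which pairs of edges cross and which do not. Thus the edge set corresponding to $T'$ inside $H$ is pairwise non-crossing, which means it forms a plane subgraph of $H$ (and hence of $G$) abstractly isomorphic to $T$. This is the desired conclusion. There is essentially no obstacle; the only thing worth stating carefully is that ``weakly isomorphic'' preserves crossing patterns but not the actual topological embedding, so the plane subgraph of $G$ produced this way need not be topologically equivalent to the plane embedding of $T$ inside $C_m$ or $T_m$ — but the corollary only asserts graph-theoretic isomorphism, so this is sufficient.
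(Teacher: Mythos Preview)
Your proposal is correct and matches the paper's approach exactly: the paper also derives the corollary directly from Theorem~\ref{main} together with the cited fact (from \cite{gmpp}) that $C_m$ and $T_m$ each contain a plane copy of any tree on at most $m$ vertices, transferring the plane subgraph back via the weak isomorphism. Your additional remark about weak isomorphism preserving only crossing patterns, not the topological embedding, is a valid clarification but not strictly necessary for the argument.
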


\noindent  In the case when $T$ is a path, we improve this bound with the following result, which is also recently obtained in \cite{oswin2022} independently.

\begin{theorem}\label{path}
Every complete $n$-vertex simple topological graph contains a plane path of length at least $(\log n)^{1 -o(1)}$.
\end{theorem}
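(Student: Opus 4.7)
}
My plan is to produce the plane path by an iterative extension argument that loses only a polylogarithmic factor in an ``available vertex count'' at each step. To set up the induction, I would first formulate a rooted version of the problem: a complete simple topological graph $H$ on $N$ vertices drawn inside a topological disk $D$, with a distinguished tip vertex $v$ on the boundary $\partial D$, such that every edge of $H$ lies in $D$ and touches $\partial D$ only at $v$. The original problem on $G$ reduces to this setting by picking an arbitrary first vertex $v_1$ and observing that the non-crossing star of edges at $v_1$ already carves the sphere into such disks, one of which contains at least $n/2$ of the remaining vertices.

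The main lemma I would prove is that in any such rooted configuration on $N$ vertices, one can extend the path by one edge $vu$ while retaining a rooted sub-configuration on at least $N/(\log N)^{O(1)}$ vertices with $u$ as its new tip. I plan to prove this via an Erd\H os--Szekeres / Dilworth-type argument on the cyclic order at $v$: the non-crossing star at $v$ induces a linear order on the remaining vertices relative to $\partial D$, and by pigeonholing on a suitable crossing-pattern statistic of each vertex (for instance, the pattern of how its edges to a small set of reference vertices interact with the fan at $v$), I can extract a large uniform subset $S$. Picking the correct $u\in S$ and using the edge $vu$ as a new boundary then carves out a subdisk $D'\subseteq D$ containing the retained configuration, and uniformity of $S$ is what guarantees that no edge among the retained vertices crosses $vu$ or escapes $D'$. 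Iterating this lemma $(\log n)^{1-o(1)}$ times produces a plane path of the claimed length.

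The main obstacle I anticipate is precisely the Erd\H os--Szekeres step: a uniform crossing pattern for single pairs is not sufficient, because after several extensions the new tip must see a large consistent sub-configuration across the whole accumulated path. One therefore needs a stronger uniformity that simultaneously controls the cyclic position of each candidate vertex at the current tip \emph{and} its relationship to several of the previous tips; pulling this off without collapsing the candidate set more than a polylogarithmic factor per step is the crux, and is what ultimately yields the $(\log n)^{1-o(1)}$ bound rather than something smaller.
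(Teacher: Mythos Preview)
Your high-level plan---iterate a one-step extension lemma that shrinks the candidate set by only a polylogarithmic factor---is the right skeleton, but the proposal is missing the mechanism that actually delivers the polylog loss, and without it the argument collapses. A bare Erd\H os--Szekeres/Dilworth step on the cyclic order at the current tip only guarantees a monotone subsequence of length $\sqrt{|S|}$, i.e.\ a square-root loss per step, which yields a path of length $O(\log\log n)$, not $(\log n)^{1-o(1)}$. The paper closes this gap with a dichotomy you do not mention: setting $m\approx \log n/\log\log n$, either some rotation sequence $\theta(v_i)$ contains an \emph{increasing} subsequence of length $m^2$, in which case those vertices together with $v_0$ and $v_i$ form a plane $K_{2,m^2}$, and one invokes a separate lemma of Fulek--Ruiz-Vargas (built on T\'oth's $x$-monotone result) that a plane $K_{2,m^2}$ inside $G$ already forces a plane path of length $\Omega(m)$; or else every $\theta(v_i)$ has no increasing subsequence of length $m^2$, and then Dilworth gives a \emph{decreasing} subsequence of length $|S|/m^2$ at every step, which is the polylog loss you need. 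Your ``pigeonholing on a suitable crossing-pattern statistic'' is too vague to substitute for this, and any honest attempt to spell it out runs into the square-root barrier.

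There is also a structural difference worth noting. The paper does \emph{not} re-root at the new tip and carve a fresh disk as you propose; it keeps a single anchor $v_0$ fixed for the entire construction. All the triangles $\Delta_{v_0u_iu_{i+1}}$ share the apex $v_0$, and the only per-step bookkeeping beyond Dilworth is a single factor-$2$ pigeonhole on whether the remaining candidates lie inside or outside the latest triangle. Planarity of the final path $u_1u_2\cdots u_m$ is then verified by two short claims about how $u_ju_{j+1}$ interacts with the edges $v_0u_i$ and the triangle $\Delta_{v_0u_iu_{i+1}}$, both of which exploit the fixed anchor and the decreasing-subsequence property. Your moving-disk formulation would instead require showing that \emph{every} edge among the retained vertices avoids the newly added boundary arc $vu$---a global condition on pairs, not a per-vertex statistic---and you have not indicated how to secure it while keeping the loss polylogarithmic.
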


In order to avoid confusion between topological and combinatorial edges, we write $uv$ when referring to a topological edge in the plane, and write $\{u,v\}$ when referring to an edge (pair) in a graph. Likewise, we write $\{u_1,\dots, u_k\}$ when referring to an edge ($k$-tuple) in a $k$-uniform hypergraph.  We systemically omit floors and ceilings whenever they are
not crucial for the sake of clarity in our presentation. All logarithms are in base 2.

\section{Monotone Paths and Online Ramsey Numbers}

Before we prove Theorem \ref{main}, let us recall the following lemmas. 
Let $H$ be a $k$-uniform hypergraph with vertex set $[n]  =\{1,2,\dots, n\}$. We say that $H$ contains a \emph{monotone $k$-path} of length $m$ if there are $m$ vertices $v_1 < v_2 < \cdots <  v_{m}$ such that $\{v_i,v_{i + 1},\dots, v_{i + k - 1}\}\in E(H)$ for $1 \leq i \leq m - k + 1$.  We say that the edge set $E(H)$ is \emph{transitive} if for any $v_1<v_2<\cdots<v_{k+1}$ in $[n]$, the condition $\{v_1,v_2,\dots, v_{k}\},\{v_2,v_3,\dots, v_{k + 1}\}\in E(H)$ implies all $k$-element subsets of $\{v_1,\dots, v_{k  + 1}\}$ are in $E(H)$.
We will need the following lemma due to Fox, Pach, Sudakov, and Suk.
\begin{lemma}[\cite{fpss}]\label{complete}
Let $n > k$, and let $H$ be a $k$-uniform hypergraph with vertex set $[n]$, which
contains a monotone path of length $n$, that is, $\{i, i+1,\dots , i+k-1\}\in  E(H)$ for all $1 \leq i \leq n -k+1$.  If $E(H)$ is transitive, then $H$ is the complete k-uniform hypergraph on $[n]$.
\end{lemma}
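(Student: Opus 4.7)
My plan is to prove Lemma~\ref{complete} by induction on the \emph{spread} $v_k - v_1$ of a $k$-subset $S = \{v_1 < v_2 < \cdots < v_k\}$ of $[n]$, aiming to conclude that every such $S$ lies in $E(H)$. The base case is $v_k - v_1 = k-1$, in which case $S$ consists of $k$ consecutive integers and is therefore an edge by the monotone $k$-path hypothesis.

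For the inductive step, assume $v_k - v_1 \geq k$ and that the claim holds for every $k$-subset of $[n]$ of strictly smaller spread. Since $k$ elements occupy an interval containing at least $k+1$ integers, pigeonhole produces an index $i \in \{1, \ldots, k-1\}$ with $v_{i+1} - v_i \geq 2$. Choose any integer $w$ with $v_i < w < v_{i+1}$ and form the $(k+1)$-element set $T = S \cup \{w\}$, whose elements in sorted order read
\[
v_1,\; v_2,\; \ldots,\; v_i,\; w,\; v_{i+1},\; \ldots,\; v_k.
\]
The initial $k$-segment $T \setminus \{v_k\}$ retains $v_1$ as its minimum, while its maximum is $\max(v_{k-1}, w) < v_k$; hence its spread is strictly less than $v_k - v_1$. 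An analogous observation applies to the terminal $k$-segment $T \setminus \{v_1\}$. By the inductive hypothesis both are edges of $H$, so transitivity, applied to $T$, forces every $k$-subset of $T$ to be in $E(H)$. In particular $S = T \setminus \{w\} \in E(H)$, closing the induction.

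The only point I expect to require any care is verifying that the two extremal $k$-subsets of $T$ genuinely have strictly smaller spread in the boundary situations $i = 1$ and $i = k-1$, where the inserted $w$ sits immediately next to $v_1$ or $v_k$; each case is a one-line check using $v_i < w < v_{i+1}$. Beyond this minor bookkeeping, the argument uses nothing more than a single application of transitivity per inductive step, so I do not anticipate any substantial obstacle.
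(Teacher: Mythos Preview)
Your argument is correct. The paper itself does not prove this lemma; it is quoted from \cite{fpss} and used as a black box, so there is no in-paper proof to compare against. Your induction on the spread $v_k - v_1$, inserting a single point $w$ into a gap and invoking transitivity once on the resulting $(k+1)$-tuple, is the standard and natural route, and the boundary checks for $i=1$ and $i=k-1$ go through exactly as you say since $v_i < w < v_{i+1}$ keeps the new extremum strictly inside $(v_1,v_k)$.
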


Next, we need a lemma from Online Ramsey Theory.  The \emph{vertex online Ramsey game} is a game played by two players, \emph{builder} and \emph{painter}. Let $t\geq 1$ and suppose vertices $v_1,v_2,\dots, v_{t-1}$ are present. At the beginning of stage $t$, a new vertex $v_t$ is added. Then for each $v_i \in \{v_1,\dots ,v_{t-1}$\}, builder decides (in any order) whether to create the edge $\{v_i,v_t\}$. If builder creates the edge, then painter has to immediately color it red or blue. When builder decides not to create any more edges, stage $t$ ends and stage $t+1$ begins by adding a new vertex. Moreover, builder must create at least one edge at every stage except for the first one. The \emph{vertex online Ramsey number} $r(m)$ is the minimum number of edges builder has to create to guarantee a monochromatic monotone path of length $m$ in a vertex online Ramsey game. Clearly, we have $r(m) \leq O(m^4)$, which is obtained by having builder create all possible edges at each stage and applying Dilworth's theorem \cite{dilworth} on the $m^2$ vertices. Fox, Pach, Sudakov, and Suk proved the following.
\begin{lemma}[\cite{fpss}]\label{online}
We have $r(m) = (1 + o(1))m^2 \log_2 m$.
\end{lemma}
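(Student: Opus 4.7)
The plan is to establish matching upper and lower bounds for $r(m)$.

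\textbf{Upper bound (builder's strategy).} For each vertex $v$ created so far, maintain $r(v)$ and $b(v)$: the lengths of the longest monochromatic monotone paths ending at $v$ in red and blue respectively. The game terminates successfully as soon as some $r(v)$ or $b(v)$ reaches $m$, so we may assume each vertex has state $(r(v),b(v))\in[m-1]^2$. When a new vertex $v_t$ arrives, I would have builder perform a two-dimensional staircase search to pin down its state: at each sub-step, query a representative of an undetermined state on the current frontier, using the painter's response (red or blue) to advance the known lower bound in either the $r$- or $b$-coordinate. With a careful choice of which state to probe next, builder can determine $(r(v_t),b(v_t))$ in $(1+o(1))\log_2 m$ queries per stage in an amortized sense. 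A direct counting argument tracking how the ``maximum realized state'' must grow then shows that this strategy forces the game to terminate within $(1+o(1))m^2$ stages, for a total of $(1+o(1))m^2\log_2 m$ edges.

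\textbf{Lower bound (painter's strategy).} Painter plays a greedy potential-minimization strategy. Weight each vertex $v$ by $\varphi(r(v),b(v)) := \binom{(m-1-r(v))+(m-1-b(v))}{m-1-r(v)}$, which counts the number of ways $v$'s state could still be extended to complete a monochromatic monotone path of length $m$. Let $\Phi$ be the sum of $\varphi$-values over currently present vertices. Whenever builder queries an edge $\{v_i,v_t\}$, painter picks the color whose resulting state update produces the smaller increment in $\Phi$. A Pascal-identity-style computation bounds each queried edge's contribution to $\Phi$ by essentially half of what the ``worse'' color would contribute, while forcing any vertex's state out of $[m-1]^2$ requires $\Phi$ to exceed a threshold of order $\binom{2(m-1)}{m-1}\approx 4^m/\sqrt{m}$. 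Comparing these rates gives the matching lower bound $(1-o(1))m^2\log_2 m$.

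\textbf{Main obstacle.} The hard part is nailing down the sharp constant $1$ on \emph{both} sides. A naive two-dimensional search over $[m-1]^2$ requires $2\log_2 m$ queries per stage, yielding only $2m^2\log_2 m$; halving this demands a subtle amortization that exploits the complementarity of red/blue responses (each query really reveals one bit of useful state information rather than two) and the fact that many of the $(m-1)^2$ states are never realized during the actual game. Symmetrically, the potential $\varphi$ must be chosen precisely -- any looser weight forfeits the constant, and a weight that is too aggressive makes it impossible to bound the per-edge increment cleanly. Getting these two analyses to meet at the same constant is what turns the argument from an $O(m^2\log m)$ estimate into the sharp asymptotic stated.
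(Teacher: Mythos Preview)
The paper does not contain a proof of this lemma: it is quoted verbatim as a result of Fox, Pach, Sudakov, and Suk \cite{fpss} and used as a black box in the proof of Theorem~\ref{main2}. So there is nothing in the present paper to compare your proposal against.

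That said, your sketch is broadly in the spirit of the argument in \cite{fpss}: track the pair $(r(v),b(v))$ of longest red/blue monotone-path lengths ending at each vertex, have builder search this state space to force each new vertex into a previously unoccupied state, and have painter resist via a binomial-coefficient potential. Where your write-up is thin is exactly the place you flag as the ``main obstacle'': you assert that builder can amortize down to $(1+o(1))\log_2 m$ edges per stage and that the game lasts $(1+o(1))m^2$ stages, but you do not supply the mechanism. In \cite{fpss} the sharp constant on the upper-bound side comes not from a vague ``one bit per query'' heuristic but from a specific search rule together with a careful double-count of how many stages can require many edges; your proposal does not yet contain that ingredient. Likewise, on the lower-bound side you state the right potential but the sentence ``comparing these rates gives the matching lower bound'' hides the actual calculation. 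As written, this is a correct outline of the known proof rather than a proof.
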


\section{Convex Geometric Graph versus Twisted Graph}

In this section, we prove the following theorem, from which Theorem \ref{main} quickly follows.

\begin{theorem}\label{main2}

Let $m_1,m_2,n$ be positive integers such that

\[9(m_1 m_2)^2\log(m_1)\log(m_2) < \log n.\]

\noindent Then every complete $n$-vertex simple topological graph contains a topological subgraph that is weakly isomorphic to $C_{m_1}$ or $T_{m_2}$.

\end{theorem}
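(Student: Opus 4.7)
My plan is to interpret the search for $C_{m_1}$ or $T_{m_2}$ as a nested online Ramsey game and glue it together via Lemmas \ref{online} and \ref{complete}. The shape of the hypothesis, $\log n > 9(m_1 m_2)^2 \log m_1 \log m_2$, which is, up to constants, the product $r(m_1) \cdot r(m_2)$ of two online Ramsey numbers from Lemma \ref{online}, strongly suggests combining two instances of Lemma \ref{online} multiplicatively, with each outer commitment enforced by a complete inner Ramsey subgame.

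First, I would fix an arbitrary vertex $v_0$ of the given simple topological graph and linearly order the remaining vertices $v_1 < v_2 < \cdots < v_{n-1}$ according to the cyclic rotation of the star edges $v_0 v_i$ around $v_0$. On top of this order I would define a two-coloring (``red'' vs.\ ``blue'') on the pairs (or small tuples) of vertices encoding the topological interaction of the corresponding chord with the star at $v_0$, chosen so that (i) a uniformly red family is weakly isomorphic to $C_m$, (ii) a uniformly blue family is weakly isomorphic to $T_m$, and (iii) the coloring is transitive in the sense required by Lemma \ref{complete}. Items (i) and (ii) can be checked by direct comparison with the standard drawings of $C_m$ and $T_m$, while (iii) requires a topological case-analysis using that any two edges cross at most once.

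To extract a long monochromatic monotone path, I would run Lemma \ref{online} as an outer game, aiming for a red monotone path of length $m_1$ or a blue one of length $m_2$. Each of the $r(m_1)$ outer color-commitments is in turn forced by running a full inner online Ramsey game of depth $r(m_2)$, where each inner binary commitment restricts the remaining pool of vertices of $G$ to a subset of roughly half the current size. Over the full two-level play, the total number of binary restrictions is at most $r(m_1)\, r(m_2)$, so $n \geq 2^{r(m_1) r(m_2)}$ vertices survive---which is exactly what the hypothesis provides, with the factor $9$ absorbing the $(1+o(1))$ terms from Lemma \ref{online}. Feeding the resulting monochromatic monotone path into Lemma \ref{complete} and invoking transitivity yields a complete monochromatic subhypergraph, hence a subgraph weakly isomorphic to $C_{m_1}$ or $T_{m_2}$. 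The main technical hurdle is the simultaneous design of the two-coloring satisfying (i)--(iii) together with the specification of the inner Ramsey game so that each outer commitment can indeed be enforced by only $r(m_2)$ further decisions; this is a careful topological case-check relying on the simple-topological-graph axioms.
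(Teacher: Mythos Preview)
Your high-level setup---fix $v_0$, order the remaining vertices by the rotation at $v_0$, color tuples, and seek a monochromatic monotone path---is right, and so is the instinct that Lemma~\ref{online} is the engine. But two of your assumptions do not survive contact with the actual coloring.

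First, the natural coloring is a \emph{four}-coloring $\chi$ of ordered \emph{triples} (colors $000,010,001,100$, recording which chord crosses which star edge), not a two-coloring of pairs. The two colors $001$ and $100$ are transitive and each yields $T_m$; the two colors $000$ and $010$ each yield $C_m$ but are \emph{not} transitive. So your item~(iii) fails on the convex side, and your final step---``feed the monochromatic path into Lemma~\ref{complete} via transitivity''---cannot produce $C_{m_1}$. The paper replaces that step by a separate structural claim (Claim~\ref{PSTlemma}): once one has first restricted to a set in which every triple lies in $\{000,010\}$, a monochromatic monotone \emph{path} in $000$ or in $010$ already forces all triples to share that color.

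Second, because of this asymmetry the argument is not a symmetric nesting of two online Ramsey games. The transitive colors $001,100$ are handled by the cheap ``longest-path coordinates'' trick: set $\phi(v_i,v_j)=(a,b)$ with $a,b$ the lengths of the longest monotone $3$-paths in colors $100,001$ ending at $\{v_i,v_j\}$. If either coordinate reaches $m_2$, transitivity plus Lemma~\ref{complete} gives $T_{m_2}$; otherwise $\phi$ takes fewer than $m_2^2$ values, and within each $\phi$-class every triple has $\chi\in\{000,010\}$. Only then is Lemma~\ref{online} invoked---a \emph{single} online game for a two-colored path of length $m_1$, played in parallel across the $m_2^2$ classes. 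Each stage costs a factor $m_2^2$ (pigeonhole on $\phi$) and each builder edge a factor $2$, yielding exponent roughly $m_2^2\, r(m_1)\log m_2\approx (m_1m_2)^2\log m_1\log m_2$. This matches your $r(m_1)r(m_2)$ numerically, but the mechanism you describe (``each outer commitment enforced by an inner game of depth $r(m_2)$'') is not what actually happens and, as stated, is not well-posed.
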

\begin{proof} Let $G = (V, E)$ be a complete $n$-vertex simple topological graph. Notice that the edges of $G$ divide the plane into several cells (regions), one of which is unbounded. We can assume that there is a vertex $v_0 \in  V$ such that $v_0$ lies on the boundary of the unbounded cell. Indeed, otherwise we can project $G$ onto a sphere, then choose an arbitrary vertex $v_0$ and then project $G$ back to the plane such that $v_0$ lies on the
boundary of the unbounded cell, moreover, the new drawing is isomorphic to the original one as topological graphs.

Consider the topological edges emanating out from $v_0$, and label their endpoints $v_1,\dots , v_{n-1}$ in
clockwise order.  For convenience, we write $v_i \prec v_j$ if $i < j$.  Given subsets $U,W \subset \{v_1,\dots, v_{n-1}\}$, we write $U\prec W$ if $u \prec w$ for all $u \in U$ and $w \in W$.  Following the notation used in \cite{pst}, we color the triples of $\{v_1,\dots, v_{n-1}\}$ as follows. For $v_i \prec v_j \prec v_k$, let $\chi(v_i,v_j,v_k) = xyz$, where $x,y,z \in\{0,1\}$ such that
\begin{enumerate}
    \item setting $x = 1$ if edges $v_jv_k$ and $v_0v_i$ cross, and let $x = 0$ otherwise;

    \item setting $y = 1$ if edges $v_iv_k$ and $v_0v_j$ cross, and let $y = 0$ otherwise;

    \item setting $z = 1$ if edges $v_iv_j$ and $v_0v_k$ cross, and let $z = 0$ otherwise.
\end{enumerate}

\noindent Pach, Solymosi, and T\'oth observed the following. \begin{observation}[\cite{pst}]\label{observation_pst}
The only colors that appear with respect to $\chi$ are $000$, $001$, $010$, and $100$.
\end{observation}

\begin{figure}
    \centering
    \includegraphics[scale=.38]{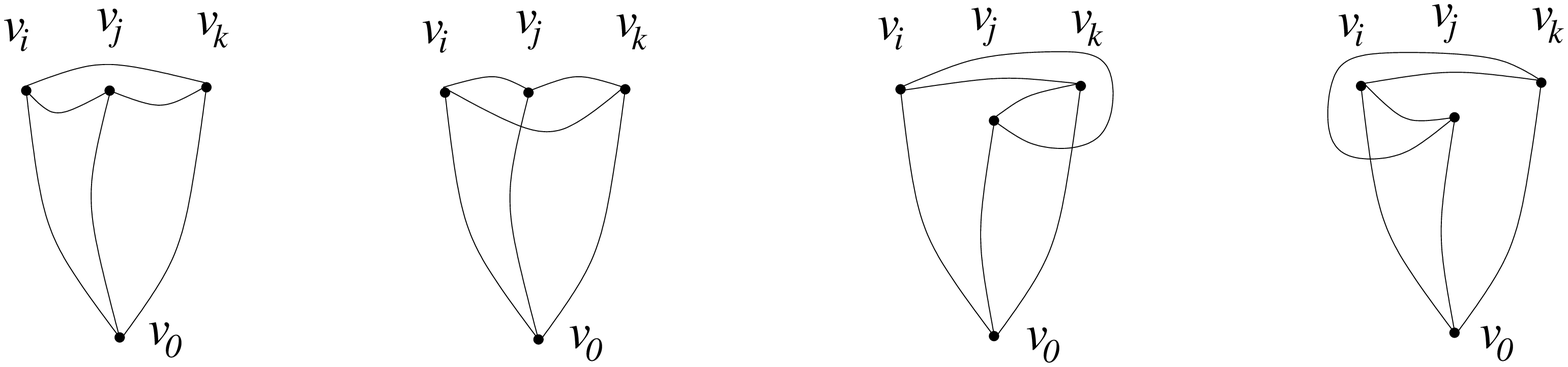}
    \caption{Configurations for 000, 010, 001, 100 respectively.}
    \label{fig:colors}
\end{figure}

See Figure \ref{fig:colors} for an illustration. We now make another observation.

\begin{lemma}\label{100}  Colors $001$ and $100$ are transitive.  That is, for $v_{i}\prec v_{j}\prec  v_{k} \prec v_{\ell}$,\begin{enumerate}
    \item if $\chi(v_{i},v_{j},v_{k}) = \chi(v_{j},v_{k},v_{\ell}) = 001$, then $\chi(v_{i},v_{j},v_{\ell}) = \chi(v_{i},v_{k},v_{\ell}) = 001$;

    \item  if $\chi(v_{i},v_{j},v_{k}) = \chi(v_{j},v_{k},v_{\ell}) = 100$, then $\chi(v_{i},v_{j},v_{\ell}) = \chi(v_{i},v_{k},v_{\ell}) = 100$.
\end{enumerate}
\end{lemma}

\begin{proof}  Suppose $\chi(v_{i},v_{j},v_{k}) = \chi(v_{j},v_{k},v_{\ell}) = 001$.  Since edges $v_0v_{\ell}$ and $v_jv_k$ cross, vertex $v_{\ell}$ must lie in the closed region bounded by edges $v_jv_k$, $v_iv_j$, and $v_0v_k$.  See Figure \ref{t001}.  Hence, edge $v_0v_{\ell}$ crosses both $v_iv_j$ and $v_iv_k$.  Therefore, we have $\chi(v_{i},v_{j},v_{\ell}) = \chi(v_{i},v_{k},v_{\ell}) = 001$ as wanted.  If $\chi(v_{i},v_{j},v_{k}) = \chi(v_{j},v_{k},v_{\ell}) = 100$, a similar argument shows that we must have $\chi(v_{i},v_{j},v_{\ell}) = \chi(v_{i},v_{k},v_{\ell}) = 100$.\qed\end{proof}

\begin{figure}[ht]
    \centering
    \includegraphics[scale=.5]{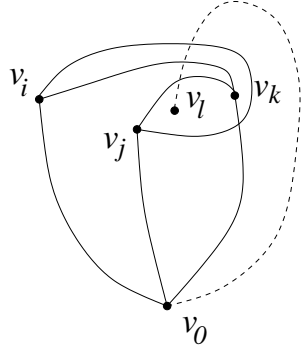}
    \caption{The closed region bounded by edges $v_jv_k$, $v_iv_j$, and $v_0v_k$ in Lemma \ref{100}.}
    \label{t001}
\end{figure}

Based on the coloring $\chi$, we define a coloring $\phi$ of the pairs of $\{v_1,v_2,\dots, v_{n-1}\}$ as follows. For $v_i \prec v_j$, let  $\phi(v_i,v_j) = (a,b)$ where $a$ is the length of the longest monotone 3-path ending at $\{v_i,v_j\}$ in color $100$, and $b$ is the length of the longest monotone 3-path ending at $\{v_i,v_j\}$ in color $001$. We can assume that $a,b < m_2$. Otherwise, by Lemma~\ref{100} and Lemma~\ref{complete}, we would have a subset $U\subset V$ of size $m_2$ whose triples are all of the same color, $100$ or $001$. And it is not hard to argue by induction that such a $U$ corresponds to a topological subgraph that is weakly isomorphic to $T_{m_2}$ as wanted.

\medskip
 
Before we continue, let us give a rough outline of the rest of the proof.  In what follows, we will construct disjoint vertex subsets $V^{a,b} \subset\{v_1,\dots, v_{n-1}\}$, where $1 < a,b < m_2$, such that $\phi$ colors every pair in $V^{a,b}$ with color $(a,b)$. For each $V^{a,b}$, we will play the vertex online Ramsey game by letting the builder create an edge set $E^{a,b}$ and designing a painter's strategy, which gives rise to a coloring $\psi$ on $E^{a,b}$. We then apply Lemma~\ref{online} to show that if $n$ is sufficiently large, some vertex set $V^{a,b}$ will contain a monochromatic monotone 2-path of length $m_1$ with respect to $\psi$.  Finally, we will show that this monochromatic monotone 2-path will correspond to a topological subgraph that is weakly isomorphic to $C_{m_1}$. The detailed argument follows.

For integers $t\geq 0$ and $1 < a,b < m_2$, we construct a vertex subset $V^{a,b}_t \subset \{v_1,\dots, v_{n-1}\}$, an edge set $E^{a,b}_t$ of pairs in $V^{a,b}_t$, and a subset $S_t\subset \{v_1,\dots, v_{n-1}\}$ such that the following holds.
\begin{enumerate}

\item We have $\sum\limits_{1 < a,b < m_2} |V^{a,b}_t| = t$.

\item For all $1 < a,b< m_2$, we have $V^{a,b}_t \prec S_t$.

\item For $u_1 \in V^{a,b}_t$, we have $\phi(u_1,u_2) =(a,b)$ for every $u_2 \in V^{a,b}_t\cup S_t$ with $u_1 \prec u_2$.
    
\item  For each edge $\{u_1,u_2\} \in E^{a,b}_t$, where $u_1 \prec u_2$, we have $\chi(u_1,u_2,u_3) = \chi(u_1,u_2,u_{4})$ for all $u_3,u_{4} \in V^{a,b}_t$ such that $u_1 \prec u_2\prec u_3\prec u_{4}$.
 
\end{enumerate}

\noindent We start by setting $V^{a,b}_0=\emptyset$ for all $1 < a,b < m_2$, and $S_0 = \{v_1,\dots, v_{n-1}\}$. After stage $t$, we have $V^{a,b}_t$, $E^{a,b}_t$, for $1 < a,b < m_2$, and $S_t$ as described above.  
 
At the beginning of stage $t + 1$, let $w_{t+1}$ be the smallest element in $S_t$ with respect to $\prec$.  By the pigeonhole principle, there exists integers $1 < \alpha,\beta < m_2$ and a subset $S_{t,0}\subset S_t\setminus\{ w_{t+1}\}$ of size at least $(|S_t|-1)/m_2^2$, such that $\phi(w_{t+1}, u) = (\alpha,\beta)$ for all $u \in S_{t,0}$. Then we set $V^{\alpha,\beta}_{t+1}:= V^{\alpha,\beta}_t\cup \{w_{t+1}\}$. For all $1<a,b<m_2$ with $(a,b)\neq (\alpha,\beta)$, we set $V^{a,b}_{t+1}:=V^{a,b}_t$ and $E^{a,b}_{t+1}:=E^{a,b}_t$.

\begin{claimlabelled}\label{quasixmonotone}
For all $u\in V^{\alpha,\beta}_t$ and $v\in S_{t,0}$, we have $\chi(u,w_{t+1},v)\in \{000,010\}$.
\end{claimlabelled}
\begin{proof}
For the sake of contradiction, suppose $\chi(u,w_{t + 1},v) = 100$, where $u\in V^{\alpha,\beta}_t$ and $v\in S_{t,0}$.  Since $\phi(u,w_{t + 1}) = (\alpha,\beta)$, the longest monotone 3-path in color 100 ending at $\{u,w_{t + 1}\}$ has length $\alpha$.  Hence, the longest monotone 3-path in color 100 ending at $\{w_{t + 1},v\}$ has length at least $\alpha + 1$.  This contradicts the fact that $\phi(w_{t + 1},v) = (\alpha,\beta)$.  A similar argument follows if $\chi(u,w_{t + 1},v) = 001$.\qed\end{proof}

Now that we have constructed $V^{\alpha,\beta}_{t+1}$ by adding $w_{t+1}$ to $V^{\alpha,\beta}_t$, we play the vertex online Ramsey game so that builder chooses and creates edges of the form $\{u,w_{t + 1}\}$, where $u \in V^{\alpha,\beta}_t$, according to his strategy. After each edge $\{u,w_{t + 1}\}$ is created, painter immediately colors it $\psi(u,w_{t + 1}) \in \{000, 010\}$ as follows. In painter's strategy, after the $j$-th edge $\{u_j,w_{t+1}\}$ is created and colored, a set $S_{t,j}\subset S_{t,0}$ will be constructed such that all triples $\{u_j,w_{t+1},v\}$ with $v\in S_{t,j}$ are colored by $\chi$ with the same color in $\{000,010\}$. After the $(j+1)$-th edge $\{u_{j+1},w_{t+1}\}$ is created, painter looks at all triples of the form $\{u_{j+1},w_{t + 1},v\}$ with $v \in S_{t,j}$. Since $\chi(u_{j+1},w_{t + 1},v) \in \{000,010\}$ by Claim~\ref{quasixmonotone}, the pigeonhole principle implies that there exists a subset $S_{t,j+1}\subset S_{t,j}$ with size at least $|S_{t,j}|/2$ such that all triples $\{u_{j+1},w_{t+1},v\}$ with $v\in S_{t,j+1}$ are colored by $\chi$ with the same color $xyz\in \{000,010\}$. Then painter sets $\psi(u_{j+1},w_{t + 1}) = xyz$.

If builder decides to stop creating edges from $w_{t + 1}$ to $V^{\alpha,\beta}_t$ after $j$ edges are created and colored, the stage ends and we set $S_{t+1} = S_{t,j}$, and we let $E^{\alpha,\beta}_{t+1}$ be the union of $E^{\alpha,\beta}_t$ and all edges built during this stage. Let $e_{t+1}$ denote the total number of edges builder creates in stage $t + 1$. Recall that $e_{t+1} \geq 1$ unless $V^{\alpha,\beta}_t=\emptyset$. As long as $|S_{t+1}| > 0$, we continue this construction process by starting the next stage. Clearly, $V^{a,b}_{t+1}$, $E^{a,b}_{t+1}$, for all $1 < a,b < m_2$, and $S_{t+1}$ have the four properties described above.  We now make the following claim.
\begin{claimlabelled}
For $t\geq 1$, we have
\[|S_t| \geq \frac{n -1 }{ m_2^{2t}\cdot 2^{\sum_{i = 2}^{t} e_i}} - \sum\limits_{i = 2}^{t} \frac{1}{m_2^{2(t+1-i)}\cdot 2^{\sum_{j = i}^{t} e_j}}.\]
\end{claimlabelled}
\begin{proof}
We proceed by induction on $t$. For the base case $t = 1$, there's no edge for the builder to build in the first stage, so $|S_1| = |S_{0,0}|\geq (n-1)/m_2^2$ as desired. For the inductive step, assume the statement holds for $t\geq 1$. When we start stage $t + 1$ and introduce vertex $w_{t + 1}$, the set $S_t$ shrinks to $S_{t,0}$ whose size is guaranteed to be at least $(|S_t|-1)/m_2^2$, and each time builder creates an edge from $w_{t + 1}$ to $V^{\alpha,\beta}_t$, our set decreases by a factor of two.  Since builder creates $e_{t + 1}$ edges during stage $t + 1$, we have
\begin{align*}
    |S_{t + 1}|& \geq \frac{|S_t| - 1}{m_2^22^{e_{t+1}}} \geq \frac{n -1 }{ m_2^{2(t+1)}\cdot 2^{\sum_{i = 2}^{t+1} e_i}} - \sum\limits_{i = 2}^{t} \frac{1}{m_2^{2((t+1)+1-i)}\cdot 2^{\sum_{j = i}^{t+1} e_j}}-\frac{1}{m_2^22^{e_{t+1}}}\\
    &=  \frac{n -1 }{ m_2^{2(t+1)}\cdot 2^{\sum_{i = 2}^{t+1} e_i}} - \sum\limits_{i = 2}^{t+1} \frac{1}{m_2^{2((t+1)+1-i)}\cdot 2^{\sum_{j = i}^{t+1} e_j}},
\end{align*}which is what we want.\qed\end{proof}

After $t$ stages, builder has created a total of $\sum_{i = 1}^{t} e_i$ edges, such that each edge has color $000$ or $010$ with respect to $\psi$.  If there is no monochromatic 2-path of length $m_1$ with respect to $\psi$ on any $(V^{a,b}_t,E^{a,b}_t)$, this implies that

\[\sum_{i = 1}^{t} e_i < m_2^2 r(m_1) \leq 2(m_1m_2)^2\log m_1.\]

\noindent Also, since $e_i\geq 1$ for all but $m_2^2$ many indices $1\leq i\leq t$, we have \[t\leq m_2^2+\sum_{i = 1}^{t} e_i< 3(m_1m_2)^2\log m_1.\]

\noindent Since we assumed

\[n > 2^{9(m_1 m_2)^2\log(m_1)\log(m_2)},\]

\noindent we have\begin{align*}
    |S_t| &\geq \frac{n -1 }{ m_2^{2t}\cdot 2^{\sum_{i = 2}^{t} e_i}} - \sum\limits_{i = 2}^{t} \frac{1}{m_2^{2(t+1-i)}\cdot 2^{\sum_{j = i}^{t} e_j}}\\
    &\geq \frac{n -1 }{2^{8(m_1m_2)^2\log (m_1)\log (m_2)}} - \sum\limits_{i = 2}^{t} \frac{1}{2^{t- i  + 1}} > 1.
\end{align*}

Hence, we can continue to the next stage and introduce vertex $w_{t  + 1}$.  Therefore, when this process stops, say at stage $s$, we must have a monochromatic monotone 2-path of length $m_1$ with respect to $\psi$ on some $(V^{a,b}_s,E^{a,b}_s)$.   

Now let $W^* = \{w_1^*,\dots, w_{m_1}^*\}$, where $w_1^*\prec \cdots \prec w_{m_1}^*$, be the vertex set that induces a monochromatic monotone 2-path of length $m_1$ with respect to $\psi$ on $(V^{a,b}_s,E^{a,b}_s)$. Since $\phi$ colors every pair in $W^*$ with the color $(a,b)$, by following the proof of Claim~\ref{quasixmonotone}, we have $\chi(w_i^*,w_j^*,w_k^*)  \in \{000,010\}$ for every $i < j < k$. Hence, the following argument due to Pach, Solymosi, and T\'oth~\cite{pst} shows that $W^*$ induces a topological subgraph that is weakly isomorphic to $C_{m_1}$. For the sake of completeness, we include the proof.
\begin{claimlabelled}\label{PSTlemma}
Let $W^{*} = \{w^*_1,\dots, w^*_{m_1}\}$ be as described above. Then $W^*$ induces a topological subgraph that is weakly isomorphic to $C_{m_1}$.
\end{claimlabelled}
\begin{proof}
Suppose $\psi(w^*_i,w^*_{i + 1}) = 000$ for all $i$. It suffices to show that every triple in $W^*$ has color 000 with respect to $\chi$.  For the sake of contradiction, suppose we have $w^*_i \prec w^*_j \prec w_k^*$ such that $\chi(w^*_i,w^*_j, w^*_k) = 010$, and let us assume that $j-i$ is minimized among all such examples. Since $\{w_{i}^*,w_{i+1}^*\}\in E^{a,b}_s$, we have $\chi(w^*_{i},w^*_{i + 1},w^*_k) =\psi(w^*_i,w^*_{i + 1}) = 000$. This implies that $j>i+1$ and the edge $w^*_{i + 1}w^*_k$ crosses $v_0w^*_j$ (see Figure~\ref{fig:PSTlemma}), which contradicts the minimality condition. A similar argument follows if $\psi(w^*_i,w^*_{i + 1}) = 010$ for all $i$.\qed\end{proof}
This completes the proof of Theorem \ref{main2}\qed\end{proof}
\begin{figure}
    \centering
    \includegraphics[scale=.5]{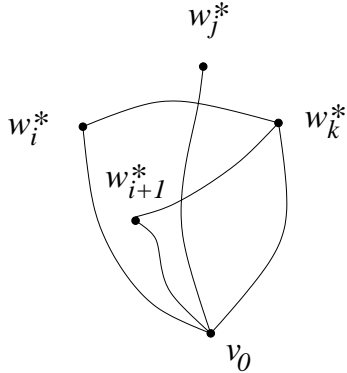}
    \caption{A figure illustrating Claim~\ref{PSTlemma}.}
    \label{fig:PSTlemma}
\end{figure}

\section{Plane Path}

In this section, we prove Theorem~\ref{path}. We will need the following lemma, which was observed by Fulek and Ruiz-Vargas in \cite{fr}.
\begin{lemma}\label{K2n}
If a complete simple topological graph $G$ contains a topological subgraph that is isomorphic to a plane $K_{2,m^2}$, then $G$ contains a plane path of length $\Omega(m)$.
\end{lemma}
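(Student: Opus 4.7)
The plan is to use the face structure of the plane $K_{2,m^2}$ subgraph to classify every edge of $G$ between the non-apex vertices into one of two types, and then apply an Erd\H{o}s--Szekeres pigeonhole to extract a monochromatic monotone plane path of length $\Omega(m)$.

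First I would fix notation: let $u,v$ be the apex vertices of the plane $K_{2,m^2}$ subgraph and $w_1,\dots,w_{m^2}$ the other vertices in clockwise cyclic order around $u$. The drawing partitions the plane into $m^2$ faces $F_1,\dots,F_{m^2}$, where $F_i$ is the $4$-cycle face bounded by $uw_i, w_iv, vw_{i+1}, w_{i+1}u$. For any $i<j$, define the strip $S_{ij} = \overline{F_i}\cup\overline{F_{i+1}}\cup\cdots\cup\overline{F_{j-1}}$; its topological boundary is the Jordan $4$-cycle $uw_ivw_ju$, while the interior $K_{2,m^2}$-edges $uw_k, vw_k$ for $i<k<j$ lie in the interior of $S_{ij}$.

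The key structural observation, which I expect to be the main step to state cleanly, is a dichotomy: for every $i<j$, the edge $w_iw_j$ of $G$ lies entirely in $S_{ij}$ (call it \emph{direct}) or entirely in the closure of $S_{ij}^c$ (call it \emph{around}). Indeed, $\partial S_{ij}$ consists of only the four edges $uw_i, vw_i, uw_j, vw_j$, each of which shares an endpoint with $w_iw_j$, so by simplicity $w_iw_j$ cannot cross $\partial S_{ij}$ and must remain on one side. Crossings of $w_iw_j$ with interior edges $uw_k, vw_k$ for $i<k<j$ are permitted and do not constitute exits from $S_{ij}$.

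With this induced $2$-coloring on pairs in $\{w_1,\dots,w_{m^2}\}$, I would apply the standard Erd\H{o}s--Szekeres trick: for each $w_t$ record the pair $(a_t,b_t)$ giving the number of vertices in the longest direct, respectively around, monotone path ending at $w_t$. Every edge $w_sw_t$ with $s<t$ strictly increases the coordinate matching its color, so the $m^2$ pairs are pairwise distinct; since $(m-1)^2<m^2$, some $w_t$ has $a_t\geq m$ or $b_t\geq m$. This yields a monochromatic monotone path $w_{i_1},w_{i_2},\dots,w_{i_m}$. Assuming the path is direct (the around case being symmetric), each edge $w_{i_j}w_{i_{j+1}}$ lies in $S_{i_j,i_{j+1}}$; consecutive such edges share the vertex $w_{i_{j+1}}$ (so do not cross by simplicity) and non-consecutive edges lie in interior-disjoint strips (so cannot cross either). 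This produces a plane path of length $m-1=\Omega(m)$ as desired.
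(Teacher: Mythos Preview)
Your dichotomy (direct versus around) and the Erd\H{o}s--Szekeres extraction are both fine, and the direct case is correct: the strips $S_{i_j,i_{j+1}}$ for distinct $j$ along a monotone direct path have pairwise disjoint interiors, so non-consecutive edges cannot meet. The gap is in the sentence ``the around case being symmetric.'' It is not. In the around case the relevant regions are the complements $S_{i_j,i_{j+1}}^c$, and these all contain the faces outside the interval $[i_1,i_m]$ (in particular they all contain $F_{m^2}$), so they pairwise overlap rather than being disjoint. Nothing then prevents two non-consecutive around edges from crossing. Concretely, with $m^2=4$ one can draw a complete simple topological graph on $\{u,v,w_1,w_2,w_3,w_4\}$ in which the $K_{2,4}$ is plane, both $w_1w_2$ and $w_3w_4$ are around, and they cross exactly once: let $w_1w_2$ leave $F_4$ across $uw_4$ and enter $F_2$ across $vw_3$, while $w_3w_4$ leaves $F_2$ across $uw_2$ and enters $F_4$ across $uw_1$; a boundary-interleaving check in $F_4$ forces a crossing there, while in $F_2$ the endpoints are non-interleaving and the curves can be drawn disjoint. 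So a monochromatic around path need not be plane, and the argument as written does not close.

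For comparison, the paper does not attempt a direct combinatorial argument of this kind. It instead invokes a structural result of Fulek and Ruiz-Vargas: from the plane $K_{2,m^2}$ one extracts a subgraph on the $m^2$ non-apex vertices with $\Omega(m^4)$ edges that is weakly isomorphic to an $x$-monotone simple topological graph, and then applies T\'oth's theorem (an $x$-monotone simple topological graph on $n$ vertices with more than $2^9k^2n$ edges contains a plane path of length $2k$) with $k=\Omega(m)$. If you want to rescue your more elementary route, the around case needs a genuinely different argument; disjointness of regions is simply unavailable there.
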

\noindent Let us briefly explain how to establish this lemma, as it is not explicitly stated in \cite{fr}.  In \cite{toth}, T\'oth proved that every $n$-vertex geometric graph with more than $2^9k^2n$ edges contains $k$ pairwise disjoint edges. His proof easily generalizes to simple topological graphs whose edges are drawn as $x$-monotone curves, and, in fact, shows the existence of a plane path of length $2k$.  

Given a plane topological subgraph $K_{2,m^2}$ inside a complete simple topological graph $G$, Fulek and Ruiz-Vargas \cite{fr} showed that there exists a topological subgraph $G'\subset G$, with $m^2$ vertices and $\Omega(m^4)$ edges, that is weakly-isomorphic to an $x$-monotone simple topological graph $G''$. Hence, we can conclude Lemma~\ref{K2n} by applying T\'oth's result stated above with $k = \Omega(m)$.

\begin{proof}[of Theorem~\ref{path}]
First, we keep the following notations from the proof of Theorem~\ref{main}. Let $G = (V, E)$ be a complete $n$-vertex simple topological graph. We can assume that there is a vertex $v_0 \in  V$ such that $v_0$ lies on the boundary of the unbounded cell. We label the other vertices by $v_1,\dots,v_{n-1}$ such that the edges $v_0v_i$, for $1\leq i<n$, emanate out from $v_0$ in clockwise order. We write $v_i \prec v_j$ if $i < j$, and color every triple $v_i\prec v_j \prec v_k$ by $\chi(v_i,v_j,v_k)\in \{000,010,100,001\}$.

For each $v_i$, we arrange the vertices $\{v_{i+1},\dots,v_{n-1}\}$ into a sequence $\theta(v_i)=(v_{j_1},\dots, v_{j_{n-1-i}})$ such that the topological edges $v_iv_0, v_iv_{j_1}, v_iv_{j_2}, \dots, v_{i}v_{j_{n-1-i}}$ emanate out from $v_i$ in counterclockwise order. See Figure~\ref{fig:radial_order}. We call a sequence of vertices $S=(v_{i_1},\dots,v_{i_k})$ increasing (or decreasing) if $v_{i_1}\prec v_{i_2}\prec \dots\prec v_{i_k}$ (or $v_{i_1}\succ v_{i_2}\succ \dots\succ v_{i_k}$).

\begin{figure}
    \centering
    \scalebox{0.5}{\includegraphics{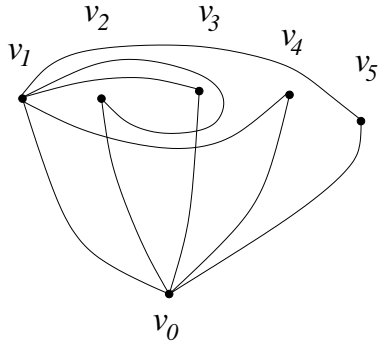}}
    \caption{An example with $\theta(v_1)=(v_4,v_3,v_2,v_5)$.}
    \label{fig:radial_order}
\end{figure}

\begin{lemma}\label{increasing}
If there exists a vertex $u$ such that $\theta(u)$ contains an increasing subsequence $(u_1,\dots,u_{m^2})$, then the edges $v_0u_{i}$ and $uu_{i}$, for all $1\leq i\leq m^2$, form a plane subgraph $K_{2,m^2}$.
\end{lemma}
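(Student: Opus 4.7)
The plan is to reduce the planarity of the claimed $K_{2,m^2}$ to a triple-coloring statement: for every $1 \leq l < l' \leq m^2$ one has $\chi(u, u_l, u_{l'}) \in \{000, 100\}$. Observation~\ref{observation_pst} restricts $\chi$ to $\{000, 001, 010, 100\}$, and the remaining two colors $010$ and $001$ are by definition precisely the ones that create one of the two potentially bad crossings in the candidate $K_{2,m^2}$, namely $uu_{l'} \cap v_0u_l$ or $uu_l \cap v_0u_{l'}$; every other pair of edges of $K_{2,m^2}$ shares an endpoint and so does not cross in a simple topological graph. Hence it suffices to rule out $010$ and $001$ using the hypothesis on $\theta(u)$.

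The key preliminary fact I would establish is an orientation claim: for each index $l$ the triangle $T_l := v_0 u u_l$ is (by simpleness) a Jordan curve, and locally at the vertex $u$ its bounded side lies in the open CCW angular sector at $u$ between $uv_0$ and $uu_l$. This holds because $v_0$ on the unbounded face together with $v_0u$ preceding $v_0 u_l$ in the CW rotation at $v_0$ forces the boundary walk $v_0\to u\to u_l\to v_0$ to be clockwise in the plane, so its right-hand interior at $u$ is the CCW sector between those two edges. Carefully checking this combinatorial orientation is the single most delicate part of the proof, but only needs to be done once.

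Given the orientation claim, rule out $\chi(u, u_l, u_{l'}) = 010$ as follows. The vanishing first and third bits tell us $v_0u_{l'}$ meets none of $v_0u$, $v_0u_l$, $uu_l$ (using the shared endpoint at $v_0$ for the first two and the third bit for the last), so $v_0u_{l'}$ never crosses $\partial T_l$. Because $v_0u_{l'}$ leaves $v_0$ CW-after $v_0u_l$, its initial direction at $v_0$ falls outside the interior angular sector of $T_l$ at $v_0$, and so $u_{l'}$ lies in the exterior of $T_l$. Then $uu_{l'}$ goes from $u \in \partial T_l$ to this exterior with exactly one crossing of $\partial T_l$, and that crossing must occur on $v_0u_l$ because the other two boundary sides meet $uu_{l'}$ at the shared vertex $u$; hence $uu_{l'}$ must leave $u$ into the interior angular sector and so appears CCW-before $uu_l$ at $u$, placing $u_{l'}$ before $u_l$ in $\theta(u)$ and contradicting the hypothesis. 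The case $\chi(u, u_l, u_{l'}) = 001$ is entirely symmetric using the triangle $T_{l'} := v_0 u u_{l'}$: now $u_l$ sits in the interior of $T_{l'}$ (because $v_0u_l$ leaves $v_0$ into the interior angular sector of $T_{l'}$ and avoids $\partial T_{l'}$ by the second bit being zero), and $uu_l$ must leave $u$ into the exterior of $T_{l'}$ at $u$ in order to cross $v_0u_{l'}$ once and enter the interior, which again forces $u_{l'}$ before $u_l$ in $\theta(u)$.
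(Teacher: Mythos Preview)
Your proof is correct, and at its core it is the same Jordan-curve argument as the paper's: both hinge on the triangle $\Delta_{v_0uu_l}$ bounded by $v_0u$, $uu_l$, $u_lv_0$, together with the orientation fact that at $u$ the bounded side lies in the CCW sector from $uv_0$ to $uu_l$. The difference is packaging. The paper never invokes $\chi$ here; it argues directly that the increasing hypothesis puts $uu_j$ into the \emph{exterior} sector at $u$, asserts that $u_j$ lies outside $\Delta_{v_0uu_i}$, and concludes that $uu_j$ cannot cross $v_0u_i$ (it would have to enter and leave, hence cross twice). You instead recast the target as ``$\chi(u,u_l,u_{l'})\notin\{010,001\}$'' and argue by contradiction; the payoff is that the vanishing third bit (resp.\ second bit) gives you for free that $v_0u_{l'}$ (resp.\ $v_0u_l$) misses the triangle's boundary, which lets you locate $u_{l'}$ (resp.\ $u_l$) relative to $\Delta$ cleanly---precisely the step the paper leaves to ``observe''. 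So your route is a bit longer but more explicit at the one place the paper is terse, while the paper's direct version is shorter once that observation is granted.
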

\begin{proof}
It suffices to show $v_0u_i$ and $uu_j$ do not cross each other for every $1\leq i,j\leq k$. When $i=j$, this follows from $G$ being simple. When $j>i$, by the increasing assumption, the edges $uv_0$, $uu_i$, and $uu_j$ emanate out from $u$ in counterclockwise order. Observe that this condition forces $u_j$ to be outside the region $\Delta_{v_0uu_i}$ bounded by the topological edges $v_0u$, $uu_{i}$, and $u_{i}v_0$. Then the Jordan arc $uu_j$ starting at $u$, initially outside $\Delta_{v_0uu_i}$, cannot enter $\Delta_{v_0uu_i}$ then leave again to end at $u_j$. In particular, $uu_j$ doesn't cross $v_0u_i$. See Figure~\ref{fig:increasing} for an illustration. A similar argument follows if $j<i$.
\qed\end{proof}

\begin{figure}[ht]
    \centering
    \scalebox{0.5}{\includegraphics{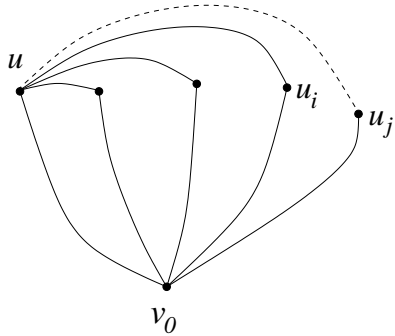}}
    \caption{An increasing subsequence of $\theta(u)$ induce a plane $K_{2,m^2}$.}
    \label{fig:increasing}
\end{figure}

We set $m = \left\lfloor\frac{\log n}{2\log\log n}\right\rfloor$ and prove that $G$ contains a plane path of length $\Omega(m)$. We can assume $m>1$, otherwise there's nothing to prove. If some sequence $\theta(v_i)$ contains an increasing subsequence of length $m^2$, then by Lemma~\ref{increasing} and Lemma~\ref{K2n}, we are done. Therefore, we assume that $\theta(v_i)$ doesn't contain an increasing subsequence of length $m^2$ for every $i$.

For integer $t\geq 1$, we inductively construct subsets $U_t, S_t\subset \{v_1,\dots, v_{n-1}\}$ with $U_t=\{u_1,\dots,u_t\}$, where $u_1\prec\dots\prec u_t$, and $U_t\prec S_t$. Initially we set $U_1=\{u_1:=v_1\}$ and $S_1=\{v_2,\dots, v_{n-1}\}$. Suppose for some $t$, we have already constructed $U_t$ and $S_t$. If $|S_t|\leq m^2$, we stop this construction process, otherwise we continue to construct $U_{t+1}$ and $S_{t+1}$ as follows: Let $\theta'$ be the subsequence of $\theta(u_{t})$ that contains exactly those vertices in $S_{t}$. Note that the length of $\theta'$ equals to $|S_t|$. According to our assumption, the length of the longest increasing subsequence in $\theta'$ is less than $m^2$. Hence, by Dilworth's theorem \cite{dilworth}, $\theta'$ contains a decreasing subsequence of length at least $|S_t|/m^2$. Let $S'_{t+1}$ be the set of vertices that appear in this decreasing subsequence of $\theta'$. Next, we take $u_{t+1}$ to be the smallest element of $S'_{t+1}$ with respect to $\prec$ and let $U_{t+1}:=U_t\cup\{u_{t+1}\}$. Consider the region $\Delta_{v_0u_{t}u_{t+1}}$ bounded by the topological edges $v_0u_{t}$, $u_{t}u_{t+1}$, and $u_{t+1}v_0$. Each vertex in $S'_{t+1}\setminus \{u_{t+1}\}$ is either inside or outside $\Delta_{v_0u_{t}u_{t+1}}$. So, by the pigeonhole principle, there exists a subset $S_{t+1}\subset S'_{t+1}\setminus \{u_{t+1}\}$ with $|S_{t+1}|\geq |S'_{t+1}\setminus \{u_{t+1}\}|/2$ such that the whole set $S_{t+1}$ is either inside or outside $\Delta_{v_0u_{t}u_{t+1}}$. Clearly, we have $U_{t+1}\prec S_{t+1}$ and
\[|S_{t+1}|\geq \frac{|S_{t}|/m^2-1}{2}\geq \frac{|S_{t}|}{(2m)^2}.\]

Using the inequality above and the fact that $|S_1|=n-2$, we can inductively prove $|S_t|\geq \frac{n}{(2m)^{2t}}$. When $t= m-1$, this gives us
\[|S_{m-1}|\geq \frac{n}{(2m)^{2(m-1)}} >  \frac{n}{(2m)^{\log n/\log\log n-2}}> m^2\cdot\frac{n}{(\log n)^{\log n/\log\log n}}\geq m^2.\]
Hence, the construction process ends at a certain $t$ larger than $m-1$, and we will always construct $U_{m}=\{u_1,\dots,u_{m}\}$.

Now we show that $u_iu_{i+1}$, for $1\leq i< m$, form a plane path. Our argument is based on the following two claims.
\begin{claimlabelled}\label{tri-region}
For any vertices $u_i\prec u_{i+1}\prec u_j\prec u_k$, we have $u_j$ and $u_k$ either both inside or both outside the region $\Delta_{v_0u_{i}u_{i+1}}$.
\end{claimlabelled}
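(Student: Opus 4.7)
The plan is to prove Claim~\ref{tri-region} by unwinding the inductive construction of the pairs $(U_t,S_t)$. The key observation is that at the end of stage $i+1$, the pigeonhole step of the construction explicitly guarantees that the entire set $S_{i+1}$ lies either entirely inside or entirely outside the region $\Delta_{v_0 u_i u_{i+1}}$. Hence, the claim reduces to showing that the later vertices $u_j$ and $u_k$ with $j,k\geq i+2$ both belong to $S_{i+1}$.

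To do this, I would first record the monotonicity $S_{t+1}\subseteq S_t\setminus\{u_{t+1}\}$, which is immediate from the construction: $S'_{t+1}$ is the vertex set of a decreasing subsequence of the restriction of $\theta(u_t)$ to $S_t$, so $S'_{t+1}\subseteq S_t$; the vertex $u_{t+1}$ is the $\prec$-smallest element of $S'_{t+1}$; and $S_{t+1}$ is then chosen as a subset of $S'_{t+1}\setminus\{u_{t+1}\}$. In particular, $u_{t+1}\in S_t$ and $S_{t+1}\subseteq S_t$ for every $t$.

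Iterating this containment, for every index $j\geq i+2$ we have $u_j\in S_{j-1}\subseteq S_{j-2}\subseteq\cdots\subseteq S_{i+1}$. Applied to both $u_j$ and $u_k$, this places both vertices in $S_{i+1}$. Combining this with the dichotomy fixed at stage $i+1$ yields that $u_j$ and $u_k$ lie on the same side of $\Delta_{v_0u_iu_{i+1}}$, which is exactly the claim.

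This step is essentially bookkeeping against the construction, so I do not anticipate any serious obstacle; the only care needed is keeping the stage indices straight, so that the side of $\Delta_{v_0u_iu_{i+1}}$ on which $S_{i+1}$ lies is determined at stage $i+1$, while the vertices $u_j$ with $j\geq i+2$ are produced only in later stages and are therefore inherited from $S_{i+1}$.
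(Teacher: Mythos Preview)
Your argument is correct and is exactly what the paper has in mind: the paper simply asserts that the claim ``is obviously guaranteed by the construction process of $U_m$,'' and you have spelled out that bookkeeping (namely $S_{t+1}\subseteq S_t$ and $u_{t+1}\in S_t$, hence $u_j,u_k\in S_{i+1}$, which was chosen to lie entirely on one side of $\Delta_{v_0u_iu_{i+1}}$).
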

Claim~\ref{tri-region} is obviously guaranteed by the construction process of $U_m$.
\begin{claimlabelled}\label{decreasing}
For any vertices $u_i\prec u_j\prec u_k$, the topological edges $v_0u_i$ and $u_ju_k$ do not cross each other.
\end{claimlabelled}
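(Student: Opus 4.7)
The plan is to reduce Claim~\ref{decreasing} to showing $\chi(u_i, u_j, u_k) \neq 100$, where $\chi$ is the $\{0,1\}^3$-coloring of ordered triples defined in Section~3. This coloring is still well-defined here since we have kept the same labeling convention at $v_0$, and by the definition of the bit $x$ in $\chi$, the equality $\chi(u_i, u_j, u_k) = 100$ is exactly the statement that $v_0 u_i$ crosses $u_j u_k$.

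First, I would invoke Observation~\ref{observation_pst} to restrict $\chi(u_i, u_j, u_k) \in \{000, 010, 001, 100\}$, reducing the task to excluding the color $100$. Next, I would extract the counterclockwise angular order at $u_i$ imposed by the construction of $U_m$: since $u_{i+1}, u_{i+2}, \ldots, u_m$ form a decreasing subsequence of $\theta(u_i)$, the edges $u_i v_0, u_i u_m, u_i u_{m-1}, \ldots, u_i u_{i+1}$ emanate from $u_i$ in this counterclockwise order. Hence for any $i < j < k$, the three edges $u_i v_0, u_i u_k, u_i u_j$ appear at $u_i$ in this counterclockwise cyclic order, with $u_i u_k$ strictly before $u_i u_j$.

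The final step is to verify, using Figure~\ref{fig:colors} or a direct case analysis, that among the four possible $\chi$-values only $\chi = 100$ forces the \emph{opposite} counterclockwise order $u_i v_0, u_i u_j, u_i u_k$ at $u_i$, while $\chi \in \{000, 010, 001\}$ are all compatible with the counterclockwise order imposed by our construction. This mismatch excludes $\chi = 100$, yielding Claim~\ref{decreasing}.

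The hard part will be this last verification. For the three ``convex'' colors $\chi \in \{010, 001, 100\}$ (those corresponding to a topological convex quadrilateral on $v_0, u_i, u_j, u_k$), the counterclockwise order at $u_i$ is uniquely pinned down by $\chi$ together with the clockwise order at $v_0$ (which is fixed by $\prec$); a direct calculation with the valid convex CCW hull orderings shows that $\chi \in \{010, 001\}$ produce the twisted order matching our construction, whereas $\chi = 100$ produces the reversed order. For the ``non-convex'' color $\chi = 000$, the order at $u_i$ is not rigid, but $x = 0$ holds automatically, so no obstruction arises.
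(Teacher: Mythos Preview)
Your approach is correct and essentially the same as the paper's: both hinge on the counterclockwise rotation at $u_i$ forced by the decreasing-subsequence construction, with the paper phrasing it as a forward case split on whether $u_k$ lies inside or outside $\Delta_{v_0u_iu_j}$ (concluding $\chi\in\{001,010\}$) rather than your contradiction from $\chi=100$. One minor slip: for $\chi=000$ the rotation at $u_i$ is in fact rigid as well---it is $u_iv_0,\,u_iu_j,\,u_iu_k$, so $000$ is also excluded by the construction---but as you note this is immaterial since $x=0$ there anyway.
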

\begin{proof}
Consider the region $\Delta_{v_0u_iu_j}$ bounded by the topological edges $v_0u_i$, $u_iu_j$, and $u_jv_0$, then $u_k$ is either inside or outside $\Delta_{v_0u_iu_j}$. If $u_k$ is inside $\Delta_{v_0u_iu_j}$, then $v_0u_k$ must cross $u_iu_j$. By Observation~\ref{observation_pst}, we have $\chi(u_i,u_j,u_k)=001$, which implies $v_0u_i$ and $u_ju_k$ do not cross. See the third configuration in Figure~\ref{fig:colors}.

Suppose $u_k$ is outside $\Delta_{v_0u_iu_j}$. By the construction process of $U_m$, the edges $u_iv_0$, $u_iu_k$ and $u_iu_j$ must emanate from $u_i$ in counterclockwise order, this implies that $u_iu_k$ crosses $v_0u_j$. Then, by Observation~\ref{observation_pst}, $\chi(u_i,u_j,u_k)=010$ and $u_ju_k$ doesn't cross $v_0u_i$. See the second configuration in Figure~\ref{fig:colors}.
\qed\end{proof}

\begin{figure}[ht]
    \centering
    \scalebox{0.4}{\includegraphics{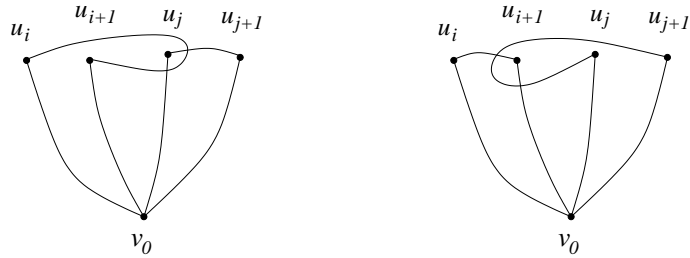}}
    \caption{For $u_iu_{i+1}$ and $u_ju_{j+1}$ with $i+1<j$ to cross each other, either $u_j$ and $u_{j+1}$ are not both inside or both outside $\Delta_{v_0u_iu_{i+1}}$ (left graph), or the topological edge $u_ju_{j+1}$ crosses one edge in $\{v_0u_i,v_0u_{i+1}\}$ (right graph).}
    \label{fig:planepath}
\end{figure}

Finally, we argue that the edges $u_iu_{i+1}$ and $u_ju_{j+1}$ do not cross for any $i<j$. When $j=i+1$, this follows from $G$ being simple. When $j>i+1$, by Claim~\ref{tri-region}, the vertices $u_j$ and $u_{j+1}$ are either both inside or both outside the region $\Delta_{v_0u_iu_{i+1}}$. So, the edge $u_ju_{j+1}$ crosses the boundary of $\Delta_{v_0u_iu_{i+1}}$ an even number of times. On the other hand, by Claim~\ref{decreasing}, $u_ju_{j+1}$ doesn't cross $v_0u_i$ or $v_0u_{i+1}$. So $u_ju_{j+1}$ doesn't cross $u_iu_{i+1}$. See Figure~\ref{fig:planepath} for an illustration. This concludes the proof of Theorem~\ref{path}.
\qed\end{proof}

\section{Concluding Remarks}

Answering a question of Pach and T\'oth \cite{pt05}, Suk showed that every complete $n$-vertex simple topological graph contains $\Omega(n^{1/3})$ pairwise disjoint edges \cite{suk} (see also \cite{fr}).  This bound was later improved to $n^{1/2 - o(1)}$ by Ruiz-Vargas in \cite{ruiz}. Hence, for plane paths, we conjecture a similar bound should hold.

\begin{conjecture}
There is an absolute constant $\varepsilon > 0$, such that every complete $n$-vertex simple topological graph contains a plane path of length $n^{\varepsilon}$.
\end{conjecture}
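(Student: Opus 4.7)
The plan is to aim for a plane path of polynomial length by combining the pairwise-disjoint-edges bound of Ruiz-Vargas \cite{ruiz} (which guarantees a plane matching of size $M=n^{1/2-o(1)}$) with the plane-$K_{2,m^2}$ mechanism of Lemma~\ref{K2n} and the radial-order analysis of Section~4. The guiding idea is to exchange every Erd\H{o}s--Szekeres-type step that currently costs a $\log$-factor for one that costs only a polynomial factor, so that the multiplicative losses per round can be absorbed into a final bound of the form $n^{\varepsilon}$.

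Concretely, I would first fix $v_0$ on the outer boundary and set up $\prec$ as in the proof of Theorem~\ref{path}, extract via \cite{ruiz} a plane matching $\mathcal{M}=\{a_1b_1,\dots,a_Mb_M\}$ with $a_i\prec b_i$ and $a_1\prec\cdots\prec a_M$, and apply Erd\H{o}s--Szekeres to $(b_1,\dots,b_M)$ to obtain a monotone subcollection $\mathcal{M}'\subseteq \mathcal{M}$ of size $\sqrt{M}=n^{1/4-o(1)}$. On $\mathcal{M}'$ the matching edges are uniformly interleaved (fully nested or fully shifted), and Observation~\ref{observation_pst} together with Lemma~\ref{100} then forces strong transitivity of the $\chi$-coloring on triples drawn from $V(\mathcal{M}')$, which is the structural input needed for the next phase.

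The next phase is a dichotomy based on the radial sequences $\theta(u)$ of Section~4. If some $u\in V(\mathcal{M}')$ has $\theta(u)$, restricted to $V(\mathcal{M}')$, containing an increasing subsequence of polynomial length $n^{\delta}$, then Lemma~\ref{increasing} yields a plane $K_{2,n^{\delta}}$ and Lemma~\ref{K2n} gives a plane path of length $\Omega(n^{\delta/2})$, finishing the proof. Otherwise, by Erd\H{o}s--Szekeres every such $\theta(u)$ contains a decreasing subsequence of polynomial length, a ``globally twisted'' regime in which the $\chi$-coloring should concentrate on a single color by a variant of Claim~\ref{PSTlemma}. In that case the target is to show directly that a convex substructure weakly isomorphic to $C_k$ with $k=n^{\Omega(1)}$ is forced; its convex-hull Hamilton cycle is then a plane path of length $k-1$.

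The main obstacle lies squarely in the decreasing-everywhere case. The construction of $U_m$ in the proof of Theorem~\ref{path} loses a multiplicative factor of $m^2$ per step, which is precisely what caps that argument at logarithmic length; to reach $n^{\varepsilon}$ one would need each extraction step to cost only a constant factor. This appears to require a genuinely new structural theorem for simple topological graphs, strictly stronger than a Ramsey-type statement for weak isomorphism to $C_m$ or $T_m$, since such Ramsey statements are known to be at most polylogarithmic in $n$ by the probabilistic construction discussed after Theorem~\ref{main}. Isolating the right structural hypothesis to fuel a polynomial-rate extraction---perhaps a quantitative fractional-Helly-type statement for the arrangement of edges incident to $v_0$ together with a second pivot vertex, or a density version of the $\chi$-transitivity in Lemma~\ref{100}---is the principal challenge and the point at which any proposal of this shape must, at present, be regarded as incomplete.
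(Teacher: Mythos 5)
This statement is an open conjecture that the paper explicitly leaves unresolved, so there is no proof in the paper to compare your proposal against. To your credit, you do not claim to have a proof: your last paragraph correctly identifies the decreasing-everywhere case as the sticking point and concedes that a ``genuinely new structural theorem'' would be required. That concession is accurate, and the writeup should be read as a research sketch rather than a proof attempt.

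Two concrete problems with the sketch as stated. First, the intended endgame of the decreasing case --- forcing a convex substructure weakly isomorphic to $C_k$ with $k = n^{\Omega(1)}$ and taking its hull as a plane path --- cannot work even in principle: the probabilistic half-circle construction (and Scheucher's construction) recalled after Theorem~\ref{main} show that one cannot guarantee a $C_k$ or $T_k$ copy for $k$ beyond $O(\log n)$, so any route through weak isomorphism to $C_k$ is capped at polylogarithmic length. A polynomial plane path, if it exists, must be found without first finding a large $C_k$. Second, the intermediate claim that after passing to a monotone matching $\mathcal{M}'$ the coloring $\chi$ becomes ``strongly transitive on triples drawn from $V(\mathcal{M}')$'' is not justified; Lemma~\ref{100} only gives transitivity for the colors $001$ and $100$ individually, and a plane matching on $V(\mathcal{M}')$ does not by itself control $\chi$ on all triples of $V(\mathcal{M}')$ (the matching edges are a tiny subset of the pairs in $V(\mathcal{M}')$). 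So even before reaching the acknowledged obstruction, the argument has an unsupported structural step. The proposal is a reasonable record of where the difficulty lies, but it does not constitute a proof, and the conjecture remains open.
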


Let $h = h(n)$ be the smallest integer such that every complete $n$-vertex simple topological graph contains an edge crossing at most $h$ other edges.  A construction due to Valtr (see page 398 in \cite{bmp}) shows that $h(n) \geq \Omega(n^{3/2})$.  In the other direction, Kyn\v{c}l and Valtr \cite{kv} used an asymmetric version of Theorem~\ref{old} to show that $h(n) = O(n^2/\log^{1/4}n)$. By using Theorem \ref{main2} instead, their arguments show that $h(n) \leq n^2/(\log n)^{1/2 - o(1)}$.  We conjecture the following.

\begin{conjecture}
There is an absolute constant $\varepsilon > 0$ such that $h(n) \leq n^{2 -\varepsilon}$.
\end{conjecture}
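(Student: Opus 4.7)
Since the Kynčl–Valtr derivation of bounds on $h(n)$ from unavoidable-pattern theorems loses only a factor linear in the pattern size, and the random half-circle construction recorded after Theorem \ref{main} caps the largest guaranteed $C_m$ or $T_m$ at $O(\log n)$, the Kynčl–Valtr route cannot push $h(n)$ below $n^2/\mathrm{polylog}(n)$. Any attack on the conjecture must therefore either replace the unavoidable-pattern input by a structural feature that is polynomially large in $n$, or bypass the reduction entirely and locate a low-crossing edge directly.

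The plan I would attempt is the following. Fix a complete simple topological graph $G$ on $n$ vertices and suppose for contradiction that every edge crosses at least $h = n^{2-\varepsilon}$ others. As in the proofs of Theorems \ref{main2} and \ref{path}, choose a vertex $v_0$ on the outer boundary and order the remaining vertices $v_1 \prec \cdots \prec v_{n-1}$ clockwise around $v_0$. Color triples by $\chi$ and pairs by $\phi$ as in the proof of Theorem \ref{main2}. Now perform a conditional version of that proof in which the standard pigeonholing is replaced by a weighted pigeonhole that charges, at each stage, the $\psi$-color to the set of edges of $G$ that cross a prospective side of the $C_m$ being built. The minimum-crossing hypothesis $h \geq n^{2-\varepsilon}$ would have to force the charging to concentrate at a polynomial rate, producing an effectively $n^{\delta}$-sized weakly convex pattern inside this particular $G$; the convex-polygon sides of such a pattern, together with a second double-counting of all crossings, would then isolate an edge crossed fewer than $n^{2-\varepsilon}$ times, a contradiction. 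A parallel analysis using the rotation-order argument from Section 4, combined with Lemma \ref{increasing} and Lemma \ref{K2n}, would handle the case where the "twisted" branch dominates, converting a polynomial $T_m$ conditionally into a large plane $K_{2,k}$ whose root edges are low-crossing.

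The main obstacle is precisely the gap between the random upper bound on pattern size and what such a conditional argument would require: the random construction shows that no Ramsey-theoretic input alone can yield a polynomial $C_m$ or $T_m$, so the plan above must use the minimum-crossing hypothesis in an essential and as-yet-unspecified way to restrict the rotation systems. Producing such a \emph{stability} version of Theorem \ref{main2} — one valid only for graphs in which no edge has too many crossings — seems to be the crux of any attack on the conjecture, and I do not currently see how to implement it without a genuinely new ingredient, perhaps from the theory of forbidden matrix patterns, from semi-algebraic Ramsey theory, or from a regularity-style decomposition for the crossing hypergraph of $G$.
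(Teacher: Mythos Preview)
The statement you are asked to prove is a \emph{conjecture} in the paper: it appears in the Concluding Remarks section as an open problem, and the paper offers no proof. There is therefore nothing to compare your proposal against.

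Your write-up is not a proof either, and you say so yourself: the core step --- a ``stability'' version of Theorem~\ref{main2} that would extract a polynomially large convex or twisted pattern under the hypothesis that every edge has at least $n^{2-\varepsilon}$ crossings --- is left entirely unspecified, and you correctly note that the random half-circle construction blocks any purely Ramsey-theoretic route. The discussion of why the Kyn\v{c}l--Valtr reduction cannot on its own beat $n^2/\mathrm{polylog}(n)$ is accurate and matches the paper's own remarks preceding the conjecture, but the ``weighted pigeonhole charging'' and the ``parallel analysis'' via Lemmas~\ref{increasing} and~\ref{K2n} are only gestures toward a strategy, not arguments. In short: the conjecture remains open, your proposal does not close it, and the paper does not claim otherwise.
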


\end{document}